\DeclareMathOperator{\sm}{sm}
\theoremstyle{plain}
\newtheorem{theorem}{Theorem}[section]
\newtheorem{corollary}[theorem]{Corollary}
\newtheorem{proposition}[theorem]{Proposition}
\theoremstyle{definition}
\newtheorem{definition}[theorem]{Definition}
\newtheorem{rem}[theorem]{Remark}
\def\ps@pprintTitle{%
	\let\@oddhead\@empty
	\let\@evenhead\@empty
	\let\@oddfoot\@empty
	\let\@evenfoot\@oddfoot
}
\title{A note on multisecants of the Kummer variety of a Jacobian}
\author{Robert Auffarth and Sebastian Rahausen}
\address{R. Auffarth \\Departamento de Matem\'aticas, Facultad de
	Ciencias, Universidad de Chile, Santiago\\Chile}
\email{rfauffar@uchile.cl}
\address{S. Rahausen \\ Departamento de Matemática \\ Universidad Técnica Federico Santa María \\ Santiago \\ Chile}
\email{srahausen@gmail.com}
\keywords{Jacobians, Kummer variety, multisecants, rational maps, linear systems}
\subjclass[2020]{14C20, 14H40, 14H51}
\begin{document}
	
	\begin{abstract}
		We show that if $C$ is a smooth projective curve and $\mathfrak{d}$ is a $\mathfrak{g}^{n}_{2n}$ on $C$, then we obtain a rational map $\mathrm{Sym}^{n}(C)\dashrightarrow\mathfrak{d}$ whose fibers can be related in an interesting way to Gunning multisecants of the Kummer variety of $JC$. This generalizes previous work done by the first author with Codogni and Salvati Manni. 
	\end{abstract}
	
	\maketitle

	\section{Introduction}
	
	It is a classic fact that the Kummer variety of the Jacobian of a smooth projective curve, canonically embedded into projective space by twice the theta divisor, has a 4-dimensional family of trisecant lines. The existence of these trisecants characterizes Jacobians among indecomposable principally polarized abelian varieties, thus giving a solution to the Schottky Problem (see \cite{K} and \cite{W}, for example). It is also true that for each $\ell\geq 3$ there exists a $(2\ell-2)$-dimensional family of $(\ell-2)$-dimensional linear subvarieties that intersect the Kummer variety in (at least) $\ell$ points, although for $\ell\geq 4$ they do not characterize Jacobians.
	
	In \cite{ACSM}, it was shown that if $(JC,\Theta)$ is the Jacobian of a smooth projective curve $C$ and 
	\[\mathcal{G}\colon\Theta\dashrightarrow\mathbb{P}T_0(JC)^\vee\] 
	\[ \ \ \ p\mapsto dt_{-p}T_p(\Theta)\]
	is the Gauss map of $\Theta$ (which is defined on the smooth locus $\Theta^{\sm}$ of $\Theta$), then if $x,y,z\in\Theta$ are such that their images in the Kummer variety are linearly dependent (and therefore lie on a trisecant line), then $\mathcal{G}$, when defined, is constant on $\{x,y,z\}$. It was also shown that there exist multisecant points of $\Theta$ that lie on the same fiber of the Gauss map, and these could be found by looking at points on the fibers of the Gauss map of high multiplicity (see \cite[Theorem 5.2]{ACSM}).
	
	The purpose of this article is to generalize what was done in \cite{ACSM} to a somewhat more general setting on a Jacobian. First of all, it is well-known (see \cite[p. 342]{GH}) that the Gauss map of $\Theta$ can be identified with the map that takes an effective divisor $D$ of degree $g-1$ and sends it $\mathrm{span}\{\phi_K(D)\}\in(\mathbb{P}^{g-1})^\vee$, where $\phi_K:C\to\mathbb{P}^{g-1}$ is the canonical map of $C$. We can then generalize this by, instead of taking $\phi_K$ to be the canonical morphism, taking it to be the morphism associated to any $\mathfrak{g}^n_d$.
	
	Indeed, if $D$ is a divisor of degree $d$ and $\mathfrak{d}\subseteq|D|$ is an $n$-dimensional linear subsystem, let $\phi_\mathfrak{d}\colon C\to \mathfrak{d}^\vee\simeq\mathbb{P}^{n}$ denote the associated map, let $C^{(n)}:=\mathrm{Sym}^n(C)$ denote the symmetric product of $C$, and define
	\[\xi_\mathfrak{d}\colon C^{(n)}\dashrightarrow\mathfrak{d}\simeq(\mathbb{P}^n)^\vee\]
	\[F\mapsto\mathrm{span}\{\phi_\mathfrak{d}(F)\}.\]
	In this article we will show that many of the results proved in \cite{ACSM} are true precisely because the Gauss map of the theta divisor of a smooth projective curve can be identified with $\xi_\mathfrak{d}$ for $\mathfrak{d}=|K_C|$, where $K_C$ is the canonical bundle. Indeed, if $\eta\in\mathrm{Pic}^n(C)$ and $\mathfrak{d}\subseteq|\eta^{\otimes2}|$ is an $n$-dimensional linear system, then just as in \cite[Section 3]{ACSM} there is a stratification $\mathfrak{d}_{n}\subseteq\mathfrak{d}_{n-1}\subseteq\cdots\subseteq\mathfrak{d}_1\subseteq\mathfrak{d}_0=\mathfrak{d}$ of $\mathfrak{d}$, where $\mathfrak{d}_1$ is the branch locus of $\xi_\mathfrak{d}$, such that, modulo a couple of technicalities, any Gunning multisecants (defined in Theorem \ref{Gunning}) lying on a fiber of $\xi_\mathfrak{d}$ must lie over one of the $\mathfrak{d}_t$ in an explicit way (see Theorem \ref{fiberstrat}). This result generalizes \cite[Proposition 5.3]{ACSM}. Reciprocally, if we look at a fiber over a general element of $\mathfrak{d}_t$, then by looking at points on a fiber of $\xi_\mathfrak{d}$ of a certain multiplicity, we can find certain multisecants (see Theorem \ref{reciprocal}). This generalizes \cite[Theorem 5.2]{ACSM}.
	
	As observed in Theorem \ref{fiberstrat}, we note that the existence of a $\mathfrak{g}^n_{2n}$ on $C$ is somewhat special, since it automatically implies that either $n\geq g$, or $n=g-1$ and $\eta$ is a theta characteristic (which is precisely the situation studied in \cite{ACSM}), or $C$ is hyperelliptic and the $\mathfrak{g}^n_{2n}$ is a multiple of the unique $\mathfrak{g}^1_2$ on $C$.
	
	We hope that this article will shed some light on the role multisecants of the Kummer variety play in the geometry of the cycles $W_n$ on Jacobians.\\
	
	\noindent\textit{Acknowledgements:} We would like to thank Giulio Codogni and Riccardo Salvati Manni for fruitful conversations, as well as an anonymous referee for helpful suggestions that attributed to the readability of the article. The first author was partially supported by ANID-Fondecyt Grant 1220997.
	
	\section{Effective multisecants}
	
	Recall that the \textit{Kummer variety} of an abelian variety $A$ is defined as $K(A):=A/\langle z\mapsto-z\rangle$, the quotient of $A$ by its natural involution. If $\Theta$ is an irreducible principal polarization on $A$, then we get the morphism
	\[\mathrm{Km}\colon A\to|2\Theta|^\vee\simeq\mathbb{P}^{2^g-1}\]
	associated to $|2\Theta|$ whose image is isomorphic to $K(A)$ (see \cite[Theorem 4.8.1]{BL}). Therefore, given an indecomposable principal polarization, we can canonically embed $K(A)$ in $\mathbb{P}^{2^g-1}$.
	
	We now recall Gunning's multisecant formula \cite[Theorem 2]{RG}:
	
	\begin{theorem}\label{Gunning}
		Let $p_1,\ldots,p_\ell,q_1,\ldots,q_{\ell-2}\in C$ be different points, and for $i\leq \ell$, consider $a_i\in JC$ such that
		\[a_i^{\otimes 2}\simeq\mathcal{O}_{C}\left(2p_i+\sum_{j=1}^{\ell-2}q_j-\sum_{k=1}^\ell p_k\right)\]
		\[a_i\otimes a_j\simeq\mathcal{O}_C\left(p_i+p_j+\sum_{j=1}^{\ell-2}q_j-\sum_{k=1}^\ell p_k\right).\]
		Then $\mathrm{Km}(a_1),\ldots,\mathrm{Km}(a_\ell)$ lie on an $(\ell-2)$-plane.
	\end{theorem}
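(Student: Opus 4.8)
The plan is to exhibit a single nontrivial linear relation among $\mathrm{Km}(a_1),\dots,\mathrm{Km}(a_\ell)$, since $\ell$ points in projective space lie on an $(\ell-2)$-plane precisely when their homogeneous coordinate vectors satisfy one nontrivial linear relation. Thus it suffices to find scalars $c_1,\dots,c_\ell$, not all zero, with $\sum_i c_i\,s(a_i)=0$ for every $s\in H^0(JC,2\Theta)$. Since the products $s_e(z)=\theta(z+e)\theta(z-e)$ span $H^0(JC,2\Theta)$ as $e$ ranges over $\C^g$, it is enough to produce $e$-independent coefficients satisfying
\[\sum_{i=1}^\ell c_i\,\theta(a_i+e)\,\theta(a_i-e)=0\qquad\text{for all }e.\]
Finally, a short computation with the two displayed isomorphisms shows that, after fixing a base point and absorbing a common $2$-torsion ambiguity, one may write $a_i=u(p_i)+b$ in $JC$, where $u$ is the Abel--Jacobi map and $2b\equiv\sum_j u(q_j)-\sum_k u(p_k)$; this reduces the claim to the points $u(p_i)+b$.

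The key idea is to realize this relation as a residue identity on $C$. I would consider the meromorphic expression
\[\omega(x)=\frac{\theta\bigl(u(x)+b+e\bigr)\,\theta\bigl(u(x)+b-e\bigr)\,\prod_{j=1}^{\ell-2}E(x,q_j)}{\prod_{k=1}^{\ell}E(x,p_k)},\]
where $E$ denotes the prime form. Counting half-order weights, the $\ell$ prime forms in the denominator contribute weight $\ell/2$ and the $\ell-2$ in the numerator weight $-(\ell-2)/2$, so $\omega$ has total weight $1$ in $x$ and hence transforms as a $1$-form.

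The crucial point is single-valuedness. Under a $B$-period translation of $x$, the two theta factors acquire characters whose dependence on $e$ cancels, leaving a contribution governed by $2b$, while the prime forms contribute a character governed by $\sum_j u(q_j)-\sum_k u(p_k)$; these cancel exactly because of the hypothesis $2b\equiv\sum_j u(q_j)-\sum_k u(p_k)$, and this happens simultaneously for every $e$. Thus $\omega$ is a genuine meromorphic $1$-form on $C$. By construction it is holomorphic away from the $p_k$ (the factors $E(x,q_j)$ kill any poles at the $q_j$, and the theta factors are entire), and, the points being distinct, it has a simple pole at each $p_k$ with
\[\mathrm{Res}_{p_k}\omega=c_k\,\theta\bigl(u(p_k)+b+e\bigr)\,\theta\bigl(u(p_k)+b-e\bigr),\qquad c_k\propto\frac{\prod_{j}E(p_k,q_j)}{\prod_{k'\neq k}E(p_k,p_{k'})}.\]
The residue theorem $\sum_k\mathrm{Res}_{p_k}\omega=0$ then gives precisely the relation $\sum_i c_i\,\theta(a_i+e)\theta(a_i-e)=0$ for all $e$, and the $c_k$ are nonzero since the points are distinct and the prime form does not vanish off the diagonal. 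Hence $\sum_i c_i\,\mathrm{Km}(a_i)=0$ is a nontrivial dependence, so the points lie on an $(\ell-2)$-plane.

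I expect the main obstacle to be the automorphy-factor bookkeeping: verifying carefully that the $A$- and $B$-period multipliers of the theta functions and prime forms cancel, so that $\omega$ is truly single-valued, and that the square-root and $2$-torsion choices implicit in the definition of the $a_i$ are compatible with the translate $u(p_i)+b$ used above. These are standard but delicate theta-function computations; once they are pinned down, the residue theorem supplies the relation at once.
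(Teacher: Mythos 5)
The first thing to say is that the paper itself contains no proof of this statement: Theorem \ref{Gunning} is recalled as known background, with the proof deferred to Gunning's paper \cite[Theorem 2]{RG}. So there is no internal argument to compare yours against; what you have written is a reconstruction of the classical proof (essentially Gunning's, and for $\ell=3$ the standard proof of Fay's trisecant identity), and in outline it is correct. The reduction to a single relation $\sum_i c_i\,\theta(a_i+e)\,\theta(a_i-e)=0$ for all $e$ is legitimate: by Riemann's addition formula $\theta(z+e)\theta(z-e)=\sum_\sigma\theta_2[\sigma](z)\,\theta_2[\sigma](e)$, and since the second-order theta functions are linearly independent as functions of $e$, such a relation is equivalent to $\sum_i c_i\,\theta_2[\sigma](a_i)=0$ for every $\sigma$, i.e.\ to linear dependence of the Kummer coordinate vectors. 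The weight count for $\omega$ is right, its poles and residues are as you say, and nonvanishing of the prime form off the diagonal does give $c_k\neq0$. (One small slip: the factors $E(x,q_j)$ do not ``kill poles at the $q_j$''---nothing in your expression has a pole there, since the theta factors are entire and the denominator vanishes only at the $p_k$; those factors are needed for the weight and multiplier balance.)

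Two of the steps you defer carry real content and should be made explicit. First, the ``short computation'': setting $b_i:=a_i-u(p_i)$ and $s:=\sum_j u(q_j)-\sum_k u(p_k)$, the hypotheses read $2b_i=s$ and $b_i+b_j=s$ for $i\neq j$; subtracting gives $b_i=b_j$, so all the $a_i$ share one \emph{common} $b$ with $2b=s$ in $JC$. This is precisely the role of the pairwise conditions on $a_i\otimes a_j$: without them each $a_i$ would only determine its own square root of $s$, the translates $u(p_i)+b_i$ would involve different $b_i$, and your single form $\omega$ could not be built. In particular there is no residual two-torsion ambiguity to ``absorb''---the hypotheses eliminate it. Second, single-valuedness of $\omega$ is a condition in $\mathbb{C}^g$, not in $JC$: with lifts fixed, the $B_m$-multiplier of $\omega$ works out to $\exp\bigl(2\pi i(\tilde s-2\tilde b)_m\bigr)$, where $\tilde s-2\tilde b$ is a priori only a period lattice vector, and its $\tau$-part would spoil single-valuedness. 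You must use the freedom in the paths of integration defining $\tilde u(p_k)$, $\tilde u(q_j)$ (the same lifts enter the prime-form multipliers) to arrange $2\tilde b=\tilde s$ exactly in $\mathbb{C}^g$. Once these two points are pinned down, the residue theorem delivers the relation and your argument is a complete proof.
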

	
	\begin{definition} Any points $a_1,\ldots,a_\ell\in JC$ that satisfy the conditions of Theorem \ref{Gunning} for certain $p_i,q_j\in C$ will be called \textit{Gunning multisecant points}. 
	\end{definition}
	
	This result implies that the Kummer variety of $JC$ has a $(2\ell-2)$-dimensional family of $\ell$-secant $(\ell-2)$-planes. Define
	\[W_n:=\{L\in\mathrm{Pic}^n(C):h^0(L)>0\};\]
	it is well-known that for $n\leq g$, $W_n$ is birational to $C^{(n)}$ via the map $C^{(n)}\to W_n$, $D\mapsto\mathcal{O}_C(D)$. Given any $\eta\in\mathrm{Pic}^n(C)$ we have an isomorphism
	\[\rho_\eta\colon\mathrm{Pic}^n(C)\to\mathrm{Pic}^0(C)=JC\]
	\[\ \ \ L\mapsto L\otimes\eta^{-1}\]
	which sends $W_n$ to a subvariety $W_n^\eta:=\rho_\eta(W_n)$.
	
	\begin{proposition}
		Let $a_1,\ldots,a_\ell\in JC$ be Gunning multisecants for $\ell\geq 3$, and let $n\geq\ell-1$. Then $\bigcap_{i=1}^\ell(W_n\otimes a_i^{-1})$ contains a subvariety that is isomorphic to $W_{n-\ell+1}$. In particular,
		\[\dim \bigcap_{i=1}^\ell(W_n\otimes a_i^{-1})\geq n-\ell+1.\]
	\end{proposition}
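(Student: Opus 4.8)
The plan is to reduce the claim to a statement about effective line bundles and then to write down the required subvariety by hand. Throughout set $Q:=q_1+\cdots+q_{\ell-2}$ and $P:=p_1+\cdots+p_\ell$. First note that a class $M\in\mathrm{Pic}^n(C)$ lies in $W_n\otimes a_i^{-1}$ exactly when $M\otimes a_i\in W_n$, that is, when $h^0(M\otimes a_i)>0$; hence $M\in\bigcap_{i=1}^\ell(W_n\otimes a_i^{-1})$ if and only if $M\otimes a_i$ is effective for every $i$. The key preliminary step is to distill from the two families of Gunning relations the one identity that drives the argument. Dividing $a_i\otimes a_j\simeq\mathcal{O}_C(p_i+p_j+Q-P)$ by $a_j^{\otimes2}\simeq\mathcal{O}_C(2p_j+Q-P)$ gives $a_i\otimes a_j^{-1}\simeq\mathcal{O}_C(p_i-p_j)$ for all $i,j$; equivalently, the class $b:=a_i\otimes\mathcal{O}_C(-p_i)$ is independent of $i$, so that
\[a_i\simeq b\otimes\mathcal{O}_C(p_i)\qquad(1\le i\le\ell),\]
with $b\in\mathrm{Pic}^{-1}(C)$.

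With this identity in hand I would take as candidate the translate
\[V:=W_{n-\ell+1}\otimes\mathcal{O}_C(Q)\otimes b^{-1}\subseteq\mathrm{Pic}^n(C).\]
The line bundle $\mathcal{O}_C(Q)\otimes b^{-1}$ has degree $(\ell-2)+1=\ell-1$, so $V$ is obtained from $W_{n-\ell+1}\subseteq\mathrm{Pic}^{n-\ell+1}(C)$ by translation into $\mathrm{Pic}^n(C)$; since translation by a fixed line bundle is an isomorphism of varieties, $V\cong W_{n-\ell+1}$. To see that $V$ lies in the intersection, write a point of $V$ as $M=\mathcal{O}_C(E+Q)\otimes b^{-1}$ with $E$ effective of degree $n-\ell+1$. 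Using $a_i\simeq b\otimes\mathcal{O}_C(p_i)$ we get
\[M\otimes a_i\simeq\mathcal{O}_C(E+Q)\otimes b^{-1}\otimes b\otimes\mathcal{O}_C(p_i)\simeq\mathcal{O}_C(E+Q+p_i),\]
an effective divisor of degree $n$. Thus $M\otimes a_i\in W_n$ for every $i$, i.e. $M\in\bigcap_{i=1}^\ell(W_n\otimes a_i^{-1})$, proving $V\subseteq\bigcap_{i=1}^\ell(W_n\otimes a_i^{-1})$.

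For the dimension bound I would invoke that $W_{n-\ell+1}$ is birational to $C^{(n-\ell+1)}$ (here $n-\ell+1\le g$, since in the regime where $W_n$ is birational to $C^{(n)}$ one has $n\le g$, and $\ell\ge3$ forces $n-\ell+1<n$), so $\dim V=\dim W_{n-\ell+1}=n-\ell+1$ and the intersection has dimension at least $n-\ell+1$. I expect the only genuine obstacle to be the algebra of the first step: everything rests on recognizing that the quadratic relations defining Gunning multisecants collapse to the single translation identity $a_i\simeq b\otimes\mathcal{O}_C(p_i)$. Once that is seen, the subvariety $V$ and the verification of the inclusion are essentially forced; in particular, neither the distinctness of the $p_i,q_j$ nor the relation $b^{\otimes2}\simeq\mathcal{O}_C(Q-P)$ is needed here, which is a useful check that this argument isolates exactly the part of the hypothesis it consumes.
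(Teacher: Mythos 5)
Your proof is correct and is essentially the paper's own argument: the identity $a_i\otimes a_j^{-1}\simeq\mathcal{O}_C(p_i-p_j)$ you extract is exactly the paper's observation that $\eta_i:=a_i^{-1}\otimes\mathcal{O}_C(p_i+\sum_j q_j+E)$ does not depend on $i$, and your subvariety $V=W_{n-\ell+1}\otimes\mathcal{O}_C(Q)\otimes b^{-1}$ coincides with the paper's family $\left\{a_i^{-1}\otimes\mathcal{O}_C\left(p_i+\sum_j q_j+E\right):E\geq 0,\ \deg E=n-\ell+1\right\}$. The only cosmetic difference is that you package the translation through the auxiliary class $b=a_i\otimes\mathcal{O}_C(-p_i)$ rather than through the common value $\eta$.
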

	
	\begin{proof}
		
		Let $p_1,\ldots,p_{\ell},q_1,\ldots,q_{\ell-2}\in C$ be different points with $3\leq \ell\leq n+1$, and let $a_1,\ldots,a_\ell\in JC$ be Gunning multisecant points as in Theorem \ref{Gunning}. Let $E$ be an effective divisor of degree $n-\ell+1$, and define 
		\[\eta_i:=a_i^{-1}\otimes\mathcal{O}_C\left(p_i+\sum_{j=1}^{\ell-2}q_j+E\right)\in\mathrm{Pic}^n(C).\]
		
		We first observe the following: the definition of $\eta_i$ does not depend on the $i$ chosen; in other words, $\eta_i\simeq\eta_j$ for all $i$ and $j$. Indeed, we want to show that $a_j\otimes a_i^{-1}\simeq \mathcal{O}_C(p_j-p_i)$. This is true since
		\[a_j\otimes a_i^{-1}\simeq a_j^{\otimes2}\otimes (a_i\otimes a_j)^{-1}\simeq\mathcal{O}_C(p_j)\otimes\mathcal{O}_C(p_j)^{-1}\]
		by the definition of the Gunning multisecant points. We will therefore write $\eta$ instead of $\eta_i$. Notice that for all $i\leq\ell$,
		\[a_i\in W_n^\eta\]
		and the images of the $a_i$ in $K(JC)$ are linearly dependent. By varying $E$ in $\mathrm{Sym}^{n-\ell+1}(C)$, we see that 
		\[\left\{a_i^{-1}\otimes\mathcal{O}_C\left(p_i+\sum_{j=1}^{\ell-2}q_j+E\right):\deg(E)=n-\ell+1,E\geq0\right\}\simeq W_{n-\ell+1},\]
		and the proposition is proved.
	\end{proof}
	
	\begin{corollary}
		Given $n\geq\ell-1$ and Gunning multisecant points $a_1,\ldots,a_\ell\in JC$, there exists an $(n-\ell+1)$-dimensional family $Z_{n}(a_1,\ldots,a_\ell)\subseteq \mathrm{Pic}^{n}(C)$ such that for all $\eta\in Z_{n}(a_1,\ldots,a_\ell)$, $a_1,\ldots,a_\ell\in W_n^\eta$.
	\end{corollary}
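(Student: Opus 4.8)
The plan is to read the family off directly from the construction in the preceding Proposition. First I would unwind the membership condition: since $W_n^\eta=\rho_\eta(W_n)=\{M\in JC: M\otimes\eta\in W_n\}$, the requirement $a_i\in W_n^\eta$ is equivalent to $a_i\otimes\eta\in W_n$, i.e.\ to $h^0(a_i\otimes\eta)>0$. So what I must produce is an $\eta\in\mathrm{Pic}^n(C)$ for which all $\ell$ line bundles $a_1\otimes\eta,\ldots,a_\ell\otimes\eta$ are \emph{simultaneously} effective, and then an entire $(n-\ell+1)$-dimensional family of such $\eta$.

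Next I would fix points $p_1,\ldots,p_\ell,q_1,\ldots,q_{\ell-2}\in C$ realizing $a_1,\ldots,a_\ell$ as Gunning multisecants in the sense of Theorem \ref{Gunning}, and for each effective divisor $E$ of degree $n-\ell+1$ set $\eta:=a_1^{-1}\otimes\mathcal{O}_C(p_1+\sum_{j=1}^{\ell-2}q_j+E)$. The key identity already proved in the Proposition, namely $a_j\otimes a_i^{-1}\simeq\mathcal{O}_C(p_j-p_i)$, shows that $a_i^{-1}\otimes\mathcal{O}_C(p_i+\sum q_j+E)$ is independent of $i$, so this single $\eta$ satisfies $a_i\otimes\eta\simeq\mathcal{O}_C(p_i+\sum_{j=1}^{\ell-2}q_j+E)$ for \emph{every} $i$. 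Each such bundle is an effective divisor of degree $1+(\ell-2)+(n-\ell+1)=n$, hence $a_i\otimes\eta\in W_n$ for all $i$ at once, as needed. I would then define $Z_n(a_1,\ldots,a_\ell)$ to be the image of the morphism $\mathrm{Sym}^{n-\ell+1}(C)\to\mathrm{Pic}^n(C)$, $E\mapsto a_1^{-1}\otimes\mathcal{O}_C(p_1+\sum q_j+E)$; by the above every point of this image is an admissible $\eta$.

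It then remains only to pin down the dimension. This parametrizing map differs from the Abel--Jacobi map $\mathrm{Sym}^{n-\ell+1}(C)\to W_{n-\ell+1}$ by the fixed translation $L\mapsto a_1^{-1}\otimes\mathcal{O}_C(p_1+\sum q_j)\otimes L$, which is an isomorphism of $\mathrm{Pic}^{n-\ell+1}(C)$ onto $\mathrm{Pic}^n(C)$; thus $Z_n(a_1,\ldots,a_\ell)\simeq W_{n-\ell+1}$, exactly the copy of $W_{n-\ell+1}$ already exhibited in the Proposition. The only genuine point to check is that the dimension is $n-\ell+1$ rather than something smaller, and here I would invoke the standard fact that $W_m$ has dimension $\min(m,g)$, so that in the relevant range $n-\ell+1\le g$ the map does not collapse and $\dim Z_n(a_1,\ldots,a_\ell)=n-\ell+1$. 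I do not expect a serious obstacle: the corollary is essentially a repackaging of the Proposition, its content being that the $\eta$ constructed there is not unique but moves in a family as $E$ varies over $\mathrm{Sym}^{n-\ell+1}(C)$. The one place to be careful is this dimension bookkeeping, ensuring the Abel--Jacobi image genuinely attains the asserted dimension.
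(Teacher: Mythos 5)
Your proposal is correct and follows essentially the same route as the paper: the corollary is read off directly from the preceding Proposition's proof, where the family $Z_n(a_1,\ldots,a_\ell)$ is exactly the translate of $W_{n-\ell+1}$ swept out by $\eta=a_i^{-1}\otimes\mathcal{O}_C\bigl(p_i+\sum_j q_j+E\bigr)$ as $E$ varies in $C^{(n-\ell+1)}$. If anything, you are slightly more careful than the paper, which leaves implicit both the unwinding of $a_i\in W_n^\eta$ and the dimension count $\dim W_{n-\ell+1}=\min(n-\ell+1,g)$ that you flag.
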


	\section{Rational maps defined on $C^{(n)}$}
	
	Let $D$ be an effective divisor of degree $d$, let $\mathfrak{d}\subseteq|D|$ be an $n$-dimensional linear system, and let $\phi_\mathfrak{d}:C\to\mathfrak{d}^\vee\simeq\mathbb{P}^n$ denote the morphism associated to $\mathfrak{d}$. Let $C^{(n)}$ denote the symmetric product $\mathrm{Sym}^n(C)$. As in the introduction, we can define the rational map
	\[\xi_\mathfrak{d}:C^{(n)}\dashrightarrow\mathfrak{d}\simeq(\mathbb{P}^n)^\vee\]
	\[F\mapsto\mathrm{span}\{\phi_\mathfrak{d}(F)\}.\]
	Note that by the General Position Theorem (see \cite[Page 109]{ACGH}), this map is well-defined on a non-empty open subset of $C^{(n)}$. Actually, we see that $\xi_\mathfrak{d}(F)$ is defined if and only if $\dim [\mathfrak{d}\cap(|D-F|+F)]=0$, since this is the same as saying that there is a unique $H\in\mathfrak{d}$ such that $F\leq H$. Since $W_n$ is birational to $C^{(n)}$, we can also define $\xi_\mathfrak{d}$ on $W_n$, but it will be more comfortable and natural in this article to work in general with divisors than line bundles. 
	
	\begin{rem} We note that if $\mathfrak{d}=|K_C|$ is the complete linear system of the canonical bundle of $C$, $\rho:C^{(g-1)}\to\mathrm{Pic}^{g-1}(C)$ is the natural map $E\mapsto\mathcal{O}_C(E)$, $\kappa$ is a theta characteristic and $\mathcal{G}:\Theta=W_{g-1}\otimes\kappa^{-1}\dashrightarrow\mathbb{P}^{g-1}$ is the Gauss map of $\Theta$, then the following diagram commutes (when defined):
		
		\centerline{\xymatrix{
				C^{(g-1)}\ar[dr]^\rho\ar@{-->}[rr]^{\xi_\mathfrak{d}}&&\mathbb{P}^{g-1}\\&W_{g-1}\ar[r]^{\otimes\kappa^{-1}}&\Theta\ar@{-->}[u]_{\mathcal{G}}
		}}
		
	\end{rem}
	\vspace{0.5cm}
	
	In what follows we will collect a few of the main properties of $\xi_\mathfrak{d}$:

	\begin{proposition}\label{map} Let $\deg\phi_\mathfrak{d}$ denote the degree of the extension $k(\phi_\mathfrak{d}(C))\hookrightarrow k(C)$. Then if $n\leq \frac{d}{\deg\phi_\mathfrak{d}}$, we have the following:
		
		\begin{enumerate}
			\item The rational map $\xi_\mathfrak{d}$  is dominant and finite, and the generic fiber is of degree $\binom{d/\deg\phi_\mathfrak{d}}{n}(\deg\phi_\mathfrak{d})^n$.
			\item Let $H=n_1p_1+\cdots+n_rp_r\in\mathfrak{d}$ where all the $p_i$ are different and $n_i\in\mathbb{N}$, and let $F=m_1p_1+\cdots+m_rp_r\in\xi_\mathfrak{d}^{-1}(H)$. Then
			\[\mathrm{mult}_{F}\xi_\mathfrak{d}=\prod_{i=1}^r\binom{n_i}{m_i}.\]
			\item If $\mathfrak{d}$ is very ample, we can recover the embedding $\phi_\mathfrak{d}$ from $\xi_\mathfrak{d}$ in the sense that $C$ is the dual of the branch locus of $\xi_\mathfrak{d}$ and $\phi_\mathfrak{d}$ is just the natural embedding of this locus into projective space. 
			
		\end{enumerate}
	\end{proposition}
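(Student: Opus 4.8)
The plan is to handle the three parts in order, using the combinatorics of hyperplane sections for (1), a local deformation count for (2), and projective duality for (3). Write $e:=\deg\phi_\mathfrak{d}$ and $\Gamma:=\phi_\mathfrak{d}(C)\subseteq\mathbb{P}^n$, an irreducible nondegenerate curve (nondegenerate since $\dim\mathfrak{d}=n$ and $\mathfrak{d}$ is base-point free). For (1), first note that $\Gamma$ has degree $d/e$: a general $\bar H\in\mathfrak{d}=(\mathbb{P}^n)^\vee$ pulls back to the degree-$d$ divisor $\phi_\mathfrak{d}^*\bar H$, and as $\phi_\mathfrak{d}$ is generically $e$-to-one onto $\Gamma$, for general $\bar H$ this divisor is $d/e$ fibers of $e$ points each, so $\#(\bar H\cap\Gamma)=d/e$. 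By the General Position Theorem these $d/e$ points lie in linearly general position inside $\bar H\cong\mathbb{P}^{n-1}$, so a subset spans $\bar H$ exactly when it consists of $n$ distinct points. Hence an element $F\in\xi_\mathfrak{d}^{-1}(\bar H)$ is obtained by choosing $n$ of the $d/e$ image points and one of the $e$ preimages of each, giving precisely $\binom{d/e}{n}e^n$ reduced divisors; the hypothesis $n\le d/e$ is exactly what makes this count positive. Since $\dim C^{(n)}=n=\dim\mathfrak{d}$ and the general fiber is finite and nonempty, $\xi_\mathfrak{d}$ is dominant and generically finite of the stated degree.

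For (2) I would compute $\mathrm{mult}_F\xi_\mathfrak{d}$ as the number of points of a general nearby fiber that specialize to $F$. The essential input is that each multiple point $n_ip_i$ of $H$ splits completely under deformation: because $\phi_\mathfrak{d}^*\colon\mathfrak{d}\to C^{(d)}$ is a morphism sending $\bar H\mapsto H=\sum n_ip_i$, and because a general member of $\mathfrak{d}$ is reduced (Bertini, in characteristic zero), a general $\bar H'$ arbitrarily close to $\bar H$ has $\phi_\mathfrak{d}^*\bar H'$ consisting of $n_i$ distinct simple points in a neighborhood of each $p_i$. A divisor $F'\in\xi_\mathfrak{d}^{-1}(\bar H')$ specializing to $F=\sum m_ip_i$ must then select $m_i$ of these $n_i$ points near each $p_i$, and for general $\bar H'$ every such selection does span $\bar H'$; there are thus exactly $\prod_i\binom{n_i}{m_i}$ of them, all reduced and all limiting to $F$, which is the claimed multiplicity.

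For (3), assume $\mathfrak{d}$ very ample, so that $\phi_\mathfrak{d}$ embeds $C$ as $\Gamma$ and $e=1$. By (2) the fiber of $\xi_\mathfrak{d}$ is non-reduced over $\bar H$ exactly when some $n_i\ge 2$, i.e.\ when $\phi_\mathfrak{d}^*\bar H$ is non-reduced; so the branch locus is the set $\{\bar H\in\mathfrak{d}:\phi_\mathfrak{d}^*\bar H\ \text{non-reduced}\}$, which is precisely the set of hyperplanes tangent to $\Gamma$, namely the dual variety $\Gamma^\vee\subseteq(\mathbb{P}^n)^\vee=\mathfrak{d}$. Invoking biduality (reflexivity, valid in characteristic zero), $(\Gamma^\vee)^\vee=\Gamma$, so the dual of the branch locus is $\Gamma=\phi_\mathfrak{d}(C)$ and $\phi_\mathfrak{d}$ is recovered as the inclusion of this locus into $\mathfrak{d}^\vee=\mathbb{P}^n$.

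The main obstacle is (2): one must exclude hidden ramification, i.e.\ argue that the local degree at $F$ genuinely equals the number of limiting divisors $F'$ (equivalently, that the nearby fiber is reduced near $F$ and that the $\prod_i\binom{n_i}{m_i}$ selections exhaust it). This rests on the complete-splitting statement above together with the genericity of the spanning condition. The clean way to package it is to observe that, \'etale-locally near $F$, the source $C^{(n)}$ is the product $\prod_i\mathrm{Sym}^{m_i}$ of symmetric products at the $p_i$, so the multiplicity factors as the product over $i$ of the local contributions $\binom{n_i}{m_i}$ arising from dividing the degree-$n_i$ Weierstrass factor of $\phi_\mathfrak{d}^*\bar H'$ at $p_i$ by the degree-$m_i$ factor cut out by $F'$.
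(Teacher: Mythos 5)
Your proposal is correct and follows essentially the same route as the paper: part (1) is the identical General Position Theorem count of $\binom{d/\deg\phi_\mathfrak{d}}{n}(\deg\phi_\mathfrak{d})^n$ choices over a general hyperplane, and part (3) is the same identification of the branch locus of $\xi_\mathfrak{d}$ with the dual curve $\phi_\mathfrak{d}(C)^*$ followed by reflexivity (the paper cites \cite{A} for the branch-locus statement and \cite{T} for biduality, whereas you derive the former from part (2)). For part (2) the paper gives no argument of its own --- it cites the analogous local count in \cite[Prop. 4.7]{ACSM} --- so your degeneration argument (a general nearby hyperplane splits each $n_ip_i$ into $n_i$ simple points, and the nearby fiber points limiting to $F$ are the $\prod_i\binom{n_i}{m_i}$ spanning selections) is a correct, self-contained version of precisely what the paper outsources.
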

	\begin{proof}
		For part (1), to prove the dominance of $\xi_\mathfrak{d}$, let $H$ be a general hyperplane in $\mathbb{P}^n$ such that $\phi_\mathfrak{d}^*H\in Q$ is reduced. We can take $F\leq\phi_\mathfrak{d}^*H$ such that no two points of its support lie on the same fiber of $\phi_\mathfrak{d}$. By the General Position Theorem, $\phi_\mathfrak{d}(F)$ spans $H$.
		
		As for the degree, we see that for a generic hyperplane $H$, by the General Position Theorem we can choose any $n$ of the $d/\deg\phi_\mathfrak{d}$ points of $H\cap\phi_\mathfrak{d}(C)$ in order to generate $H$. Given theses points, we need to choose in each preimage of $\phi_\mathfrak{d}$ exactly one of $\deg\phi_\mathfrak{d}$ points, and this gives us exactly $\binom{d/\deg\phi_\mathfrak{d}}{n}(\deg\phi_\mathfrak{d})^n$ choices.
		
		As for part (2), this follows from a similar argument to the one presented in the proof of \cite[Prop. 4.7]{ACSM}.
		
		For the last statement, we have that since $\mathfrak{d}$ is very ample, the curve $C_\mathfrak{d}:=\phi_\mathfrak{d}(C)$ is isomorphic to $C$ and therefore is smooth. Note that the branch locus $B$ of $\xi_\mathfrak{d}$ is equal to the dual hypersurface $C_\mathfrak{d}^*$ in $(\mathbb{P}^n)^\vee$, i.e., the set of hyperplanes in $\mathbb{P}^n$ tangent to $C_\mathfrak{d}$ (see \cite[p. 821]{A}). Using the Reflexivity Theorem \cite[Theorem 1.7]{T}, $C$ embedded in $\mathbb{P}^n$ (and therefore $\phi_\mathfrak{d}$) can be reconstructed through the branch locus of $\xi_\mathfrak{d}$, namely, $C\simeq C_\mathfrak{d}=C_\mathfrak{d}^{**}=B^*$.
	\end{proof}

	For $0\leq t\leq \lfloor\frac{d}{2}\rfloor$, following \cite[Section 3]{ACSM} (although with different indexes), let us consider
	\[\mathfrak{d}_t:=\left\{F\in\mathfrak{d}:F\text{ is of the form }F=\sum_{i=1}^{d-2t}P_i+2\sum_{j=1}^{t}Q_j\right\}.\]
	This gives us a stratification of $\mathfrak{d}$:
	\[\mathfrak{d}_{\lfloor\frac{d}{2}\rfloor}\subseteq\cdots\subseteq\mathfrak{d}_2\subseteq\mathfrak{d}_1\subseteq\mathfrak{d}_0=\mathfrak{d}.\]
	We note moreover that $\mathfrak{d}_1$ is exactly the branch locus of $\xi_\mathfrak{d}$. We can now state our first main theorem:
	
	\begin{theorem}\label{fiberstrat}
		Let $n\geq\ell-1$, $\eta\in\mathrm{Pic}^n(C)$, let $a_1,\ldots,a_\ell\in JC$ be Gunning multisecant points such that $\eta\in Z_n(a_1,\ldots,a_\ell)$ and let $\mathfrak{d}\subseteq|\eta^{\otimes2}|$ be an $n$-dimensional linear system. Let $x_i\in C^{(n)}$ be such that $\mathcal{O}_C(x_i)=a_i\otimes\eta$. Then if $\xi_\mathfrak{d}$ is defined and constant on $\{x_1,\ldots,x_\ell\}$, we have that $\xi_\mathfrak{d}(x_i)\in\mathfrak{d}_{n-\ell+1}$.
	\end{theorem}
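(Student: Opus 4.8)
The plan is to reduce the statement to producing a single effective divisor: writing $H:=\xi_\mathfrak{d}(x_i)$ for the common value, membership $H\in\mathfrak{d}_{n-\ell+1}$ is equivalent to the existence of an effective divisor $B$ of degree $n-\ell+1$ with $2B\le H$, i.e. to $\sum_R\lfloor\mathrm{mult}_R H/2\rfloor\ge n-\ell+1$, i.e. to $H$ having at most $2\ell-2$ points of odd multiplicity. First I would record the relevant linear equivalences. Since $\eta\in Z_n(a_1,\dots,a_\ell)$, the construction of $Z_n$ furnishes distinct points $p_1,\dots,p_\ell,q_1,\dots,q_{\ell-2}$ and an effective divisor $E$ of degree $n-\ell+1$ with $\eta\simeq a_i^{-1}\otimes\mathcal{O}_C(p_i+Q+E)$ for every $i$, where $Q:=\sum_{j=1}^{\ell-2}q_j$ and $P:=\sum_{k=1}^{\ell}p_k$. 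Unwinding the definitions gives $\mathcal{O}_C(x_i)=a_i\otimes\eta\simeq\mathcal{O}_C(p_i+Q+E)$, while $\mathcal{O}_C(H)=\eta^{\otimes2}\simeq\mathcal{O}_C(P+Q+2E)$ by the relation $a_i^{\otimes2}\simeq\mathcal{O}_C(2p_i+Q-P)$ of Theorem \ref{Gunning}; in particular $x_i+p_j\sim x_j+p_i$ for all $i,j$.

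Because $\xi_\mathfrak{d}$ is defined and constant on $\{x_1,\dots,x_\ell\}$, the recalled description of $\xi_\mathfrak{d}$ shows that for each $i$ the divisor $H$ is the unique member of $\mathfrak{d}$ with $x_i\le H$; in particular $x_i\le H$ for all $i$. The engine is then a rigidity argument: if $h^0(\mathcal{O}_C(x_1+p_i))=1$, the two effective divisors $x_i+p_1$ and $x_1+p_i$ lying in a single class must coincide, which forces $p_i\le x_i$ and exhibits a common effective divisor $W\sim Q+E$ of degree $n-1$ with $x_i=W+p_i$. Taking the coefficientwise maximum then gives $H\ge\bigvee_i x_i=W+P$, so that $R_0:=H-W-P$ is effective of degree $n-\ell+1$ with $R_0\sim E$. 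Finally, if moreover $h^0(\mathcal{O}_C(E))=1$ and $h^0(\mathcal{O}_C(Q+E))=1$, then $E$ and $Q+E$ are the unique effective divisors in their classes, forcing $R_0=E$ and $W=Q+E\ge E$; hence $H=P+Q+2E\ge2E$ and $H\in\mathfrak{d}_{n-\ell+1}$ with $B=E$, which also recovers \cite[Prop. 5.3]{ACSM} in the case where $\eta$ is a theta characteristic and $n=g-1$.

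The main obstacle is precisely the speciality of the systems $|x_1+p_i|$, $|Q+E|$ and $|E|$ on which this rigidity rests. By the trichotomy noted in the introduction, the existence of a $\mathfrak{g}^n_{2n}$ forces $n\ge g$, or $n=g-1$ with $\eta$ a theta characteristic, or $C$ hyperelliptic; in the middle case the relevant degrees sit below $g$ and genericity yields $h^0=1$, but when $n\ge g$ or $C$ is hyperelliptic these $h^0$ may jump, the effective representatives are no longer unique, and the clean identity $H=P+Q+2E$ can fail. The plan there is to feed back the spanning condition concealed in ``$\xi_\mathfrak{d}$ is defined'': the images $\phi_\mathfrak{d}(x_i)$ all span the same hyperplane, which pins down $W$ among the divisors linearly equivalent to $Q+E$ and, in the hyperelliptic case, lets one absorb the ambiguity through the fibers of the $\mathfrak{g}^1_2$ and the phenomenon $\deg\phi_\mathfrak{d}>1$ of Proposition \ref{map}. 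Guaranteeing that the doubled divisor $2B$ genuinely sits inside $H$, rather than merely up to linear equivalence, is where the real work lies and accounts for the ``couple of technicalities'' alluded to in the introduction.
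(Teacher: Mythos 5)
Your linear-equivalence bookkeeping is correct and you have identified the right target ($H\geq 2B$ for some effective $B$ of degree $n-\ell+1$), but the engine you build to reach it does not run in the cases where the theorem has new content. Everything rests on the rigidity hypotheses $h^0(\mathcal{O}_C(x_1+p_i))=1$, $h^0(\mathcal{O}_C(Q+E))=1$, $h^0(\mathcal{O}_C(E))=1$, which are nowhere in the theorem. Worse, they provably fail in the main case: $\deg(x_1+p_i)=n+1$, so by Riemann--Roch $h^0(\mathcal{O}_C(x_1+p_i))\geq n+2-g\geq 2$ whenever $n\geq g$, i.e.\ in case (2) of Remark \ref{cases} your rigidity step \emph{never} applies. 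Since case (1) is exactly \cite[Proposition 5.3]{ACSM} and the trichotomy of Remark \ref{cases} leaves only cases (2) and (3) as new, your proof is complete only where the result was already known, and for the remaining cases you offer a plan (``feed back the spanning condition'', ``absorb the ambiguity through the fibers of the $\mathfrak{g}^1_2$'') rather than an argument --- you say yourself that the real work is left undone. That is a genuine gap, not a technicality.

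The idea you are missing is that the uniqueness you try to extract from $h^0=1$ is already contained in the hypothesis that $\xi_\mathfrak{d}$ is \emph{defined} at each $x_k$. The paper takes $x_i=p_i+\sum_j q_j+E$ as the representative supplied by the construction of $Z_n(a_1,\ldots,a_\ell)$ (it does not attempt to recover this shape from an arbitrary representative), and then uses that definedness at $x_k$ means $H$ is the \emph{unique} element of $\mathfrak{d}$ containing $x_k$, with $y_k:=H-x_k$ effective. Writing $H=\sum_i p_i+\sum_j q_j+T$ (possible since every $p_i$ and every $q_j$ lies in $H$), one gets the identities $\sum_{i\neq k}p_i+T=E+y_k$ \emph{simultaneously for all} $k$, which force $E\leq T$; then $H=\sum_i p_i+\sum_j q_j+E+E'$ with $E'\sim E$ because $H\in|\eta^{\otimes2}|=|\mathcal{O}_C(\sum_i p_i+\sum_j q_j+2E)|$, and the same uniqueness statement for $y_1$ gives $E'=E$, hence $H\geq 2E$ and $H\in\mathfrak{d}_{n-\ell+1}$. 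No claim that $E$, $Q+E$ or $x_1+p_i$ is the unique effective divisor in its class is ever made, which is why the argument survives $n\geq g$ and the hyperelliptic case. (You are right that reading $x_i$ as a completely arbitrary representative of $a_i\otimes\eta$ makes the statement stronger than what this argument literally proves; but if that is the statement you want, your approach must still function when the $h^0$'s jump, and it does not.)
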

	
	\begin{proof} We can take the $a_i$ such that
		\[a_i^{\otimes 2}\simeq\mathcal{O}_{C}\left(2p_i+\sum_{j=1}^{\ell-2}q_j-\sum_{k=1}^\ell p_k\right)\]
		\[a_i\otimes a_j\simeq\mathcal{O}_C\left(p_i+p_j+\sum_{j=1}^{\ell-2}q_j-\sum_{k=1}^\ell p_k\right)\]
		for certain points $p_i$ and $q_j$ on $C$. We can also write
		\[x_i=p_i+\sum_{j=1}^{\ell-2}q_j+E\]
		for some $E\in C^{(n-\ell+1)}$ as in the previous section. Since $\xi_\mathfrak{d}$ is defined and constant on $x_1,\ldots,x_\ell$, we have that 
		\[H:=\xi_\mathfrak{d}(x_i)\in\mathfrak{d}\]
		is independent of $i$, and $x_i\leq H$ for all $i$. In particular, $p_i\leq H$ for all $i$. We can therefore write
		\[H=\sum_{i=1}^\ell p_i+\sum_{j=1}^{\ell-2}q_j+T\]
		for some effective divisor $T$. Now since for each $k$, $H$ is the unique element of $\mathfrak{d}$ that contains $x_k$, (since $\xi_\mathfrak{d}$ is defined at $x_k$), we have that there exists a unique effective divisor $y_k$ such that $H=x_k+y_k$. We see that
		\[\sum_{i=1}^{\ell}p_i-p_k+T=E+y_k\]
		for all $k$, and therefore we can conclude that $E\leq T$. Therefore, we have that
		\[H=\sum_{i=1}^{\ell}p_i+\sum_{j=1}^{\ell-2}q_j+E+E'\]
		for some effective divisor $E'$. On the other hand, since by hypothesis $\eta$ is chosen such that $\eta\simeq\mathcal{O}_C(x_i)\otimes a_i^{-1}$, we have that 
		\[\eta^{\otimes2}\simeq\mathcal{O}_C\left(\sum_{i=1}^{\ell}p_i+\sum_{j=1}^{\ell-2}q_j+2E\right).\]
		Since $y_1=\sum_{i=2}^\ell p_i+E'$ is unique, we can conclude that $E'=E$. In particular, $H\in\mathfrak{d}_{n-\ell+1}$.
		
	\end{proof}
	\begin{rem}\label{cases}
		Note that under the hypotheses of Theorem \ref{fiberstrat}, by Clifford's Theorem and Riemann-Roch, $\eta$ must satisfy exactly one of the following:
		\begin{enumerate}
			\item $n=g-1$ and $\eta^{\otimes2}=K_C$ is the canonical bundle.
			\item $h^0(\eta^{\otimes 2})=2n+1-g$ (i.e. $\eta^{\otimes2}$ is non-special) and $n\geq g$.
			\item $C$ is hyperelliptic and $\eta^{\otimes2}$ is a multiple of the unique $\mathfrak{g}^1_2$ on $C$.
		\end{enumerate}
		Indeed, either $\eta^{\otimes2}$ is non-special and so we are in case (2), or either $n=g-1$ and it is the canonical bundle, or $C$ is hyperelliptic and the $\mathfrak{g}^n_{2n}$ is a multiple of the unique $\mathfrak{g}^1_2$ on $C$.
	\end{rem}
	
	\begin{rem}
		Note that in \cite[Proposition 5.3]{ACSM}, the same result of Theorem \ref{fiberstrat} was proved in Case (1) of Remark \ref{cases}. Moreover, in this case it was shown that for $\ell=3$, it is indeed true that trisecant points lie on the same fiber of the Gauss map, when defined. 
	\end{rem}
	
	We can also get a reciprocal in some sense:
	
	\begin{theorem}\label{reciprocal}
		Let $d=2n$ and $H=\sum_{i=1}^{2\ell-2}P_i+2E\in\mathfrak{d}_{n-\ell+1}$, where $E=n_1z_1+\cdots+n_rz_r$ and $n_i\in\mathbb{N}$ and such that the $P_i$ and $z_j$ are different. Take $\eta\in\mathrm{Pic}^n(C)$ such that $\mathfrak{d}\subseteq|\eta^{\otimes2}|$. Then given a partition $\Sigma$ of $\{P_1,\ldots,P_{2\ell-2}\}$ into a set of cardinality $\ell$ and another of cardinality $\ell-2$, we obtain points $x_1^\Sigma,\ldots,x_\ell^\Sigma\in C^{(n)}$ such that 
		\begin{enumerate}
			\item $\xi_\mathfrak{d}$ is constant on $\{x_1^\Sigma,\ldots,x_\ell^\Sigma\}$, when defined, with image $H$.
			\item $\mathrm{mult}_{x_i^\Sigma}\xi_\mathfrak{d}=\prod_{i=1}^r\binom{2n_i}{n_i}$ if $\xi_\mathfrak{d}$ is defined at $x_i^\Sigma$.
			\item If $a_i:=\mathcal{O}_C(x_i^\Sigma)\otimes\eta^{-1}\in W_n^\eta$, then the images of $a_1,\ldots,a_\ell$ in the Kummer variety are linearly dependent.
		\end{enumerate}
		
		On the other hand, if $F\in\xi_\mathfrak{d}^{-1}(H)$ is such that $\mathrm{mult}_{F}\xi_\mathfrak{d}=\prod_{i=1}^r\binom{2n_i}{n_i}$, then $F=x_j^\Sigma$ for some partition $\Sigma$ and some $j$. 
	\end{theorem}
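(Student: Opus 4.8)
The plan is to make the $x_i^\Sigma$ explicit, verify (1)--(3) by direct divisor computations combined with Proposition \ref{map}(2), and then deduce the converse from the strict unimodality of the central binomial coefficients. Given the partition $\Sigma$, write the block of cardinality $\ell$ as $\{p_1,\dots,p_\ell\}$ and the block of cardinality $\ell-2$ as $\{q_1,\dots,q_{\ell-2}\}$, so that together they exhaust $\{P_1,\dots,P_{2\ell-2}\}$; following the divisors appearing in the proof of Theorem \ref{fiberstrat}, I set
\[
x_i^\Sigma:=p_i+\sum_{j=1}^{\ell-2}q_j+E\in C^{(n)},
\]
which has degree $1+(\ell-2)+(n-\ell+1)=n$. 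Since $H=\sum_{k=1}^{\ell}p_k+\sum_{j=1}^{\ell-2}q_j+2E$, the difference $H-x_i^\Sigma=\sum_{k\neq i}p_k+E$ is effective, so $x_i^\Sigma\leq H$; whenever $\xi_\mathfrak{d}$ is defined at $x_i^\Sigma$ the divisor $H$ is then the unique element of $\mathfrak{d}$ dominating it, which is assertion (1).

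Before computing multiplicities I would note that the hypothesis $n\leq d/\deg\phi_\mathfrak{d}$ of Proposition \ref{map} is automatic here: the image $\phi_\mathfrak{d}(C)\subset\mathbb{P}^n$ is a nondegenerate irreducible curve of degree $2n/\deg\phi_\mathfrak{d}$, which must be at least $n$, forcing $\deg\phi_\mathfrak{d}\leq 2$ and hence $n\leq 2n/\deg\phi_\mathfrak{d}=d/\deg\phi_\mathfrak{d}$. For assertion (2) I then read off that in $H$ each $P_k$ has multiplicity $1$ and each $z_k$ multiplicity $2n_k$, while $x_i^\Sigma$ carries $p_i$ and the $q_j$ with multiplicity $1$ and each $z_k$ with multiplicity $n_k$; the formula of Proposition \ref{map}(2) contributes a factor $\binom{1}{0}$ or $\binom{1}{1}$ for each $P_k$ and a factor $\binom{2n_k}{n_k}$ for each $z_k$, leaving exactly $\prod_{k=1}^r\binom{2n_k}{n_k}$.

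Assertion (3) is the substantive step: I claim the $a_i$ are Gunning multisecant points for these same $p_i,q_j$, after which Theorem \ref{Gunning} places $\mathrm{Km}(a_1),\dots,\mathrm{Km}(a_\ell)$ on an $(\ell-2)$-plane, giving the asserted linear dependence. The input is that $H\in\mathfrak{d}\subseteq|\eta^{\otimes2}|$ forces $\eta^{\otimes2}\simeq\mathcal{O}_C(H)=\mathcal{O}_C\big(\sum_{k=1}^\ell p_k+\sum_{j=1}^{\ell-2}q_j+2E\big)$. Substituting this into $a_i^{\otimes2}=\mathcal{O}_C(2x_i^\Sigma)\otimes\eta^{-2}$ and $a_i\otimes a_j=\mathcal{O}_C(x_i^\Sigma+x_j^\Sigma)\otimes\eta^{-2}$, the $E$-terms cancel and the $q_j$-terms reduce to a single copy, so the two products collapse precisely onto the two line-bundle identities defining Gunning multisecant points in Theorem \ref{Gunning}.

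Finally, for the converse, any $F\in\xi_\mathfrak{d}^{-1}(H)$ satisfies $F\leq H$, so in $F$ each $P_i$ occurs with multiplicity $0$ or $1$ and each $z_k$ with some $m_k\in\{0,\dots,2n_k\}$; by Proposition \ref{map}(2) its multiplicity is $\prod_{k=1}^r\binom{2n_k}{m_k}$. Equating this to $\prod_{k=1}^r\binom{2n_k}{n_k}$ and using that $\binom{2n_k}{m}$ attains its maximum only at $m=n_k$ forces $m_k=n_k$ for every $k$, whence $F=E+\sum_{i\in S}P_i$ with $|S|=\ell-1$ (forced by $\deg F=n$). To exhibit $F$ as some $x_j^\Sigma$, I take any $(\ell-2)$-element subset of $S$ as the cardinality-$(\ell-2)$ block, its complement as the cardinality-$\ell$ block, and the one remaining point of $S$ as $p_j$; this yields a partition with $x_j^\Sigma=F$. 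The only step beyond bookkeeping is assertion (3)---seeing that the two line-bundle identities collapse exactly onto Gunning's hypotheses once $\eta^{\otimes2}$ is traded for $\mathcal{O}_C(H)$---while the converse rests entirely on the uniqueness of the maximum of the central binomial coefficients.
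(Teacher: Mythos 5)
Your proof is correct and follows essentially the same route as the paper's: the same divisors $x_i^\Sigma=p_i+\sum_j q_j+E$, the same appeal to Proposition \ref{map}(2) for the multiplicity, and the same reduction of assertion (3) to Gunning's two line-bundle identities via $\mathcal{O}_C(H)\simeq\eta^{\otimes2}$. The details you supply beyond the paper --- verifying the hypothesis $n\leq d/\deg\phi_\mathfrak{d}$ of Proposition \ref{map} through the degree bound for nondegenerate curves, and making the converse precise via the strict unimodality of $m\mapsto\binom{2n_k}{m}$ together with the degree count $|S|=\ell-1$ --- are exactly what the paper compresses into ``a simple computation'' and ``we can retrace our steps.''
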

	
	\begin{proof} Let 
		\[\Sigma=\{p_1,\ldots,p_\ell\}\sqcup\{q_1,\ldots,q_{\ell-2}\}\]
		be a partition of $\{P_1,\ldots,P_{2\ell-2}\}$, and define
		\[x_i^\Sigma:=p_i+\sum_{j=1}^{\ell-2}q_j+E\leq H.\]
		We note then that if $\xi_\mathfrak{d}$ is defined on $x_i^\Sigma$, then
		\[\xi_\mathfrak{d}(x_i^\Sigma)=H.\]
		
		Moreover, by Proposition \ref{map}, we have that the multiplicity of $\xi_\mathfrak{d}$ at $x_i^\Sigma$ is exactly $\prod_{i=1}^r\binom{2n_i}{n_i}$. This proves items (1) and (2).
		
		For part (3), let $a_1,\dots,a_l\in W_n^\eta$ be as in the theorem. A simple computation shows that
		\[a_i^{\otimes 2}\simeq\mathcal{O}_{C}\left(2p_i+\sum_{j=1}^{\ell-2}q_j-\sum_{k=1}^\ell p_k\right)\]
		\[a_i\otimes a_j\simeq\mathcal{O}_C\left(p_i+p_j+\sum_{j=1}^{\ell-2}q_j-\sum_{k=1}^\ell p_k\right).\]
		Then we have that $a_1,\ldots,a_\ell$ are Gunning multisecant points.
		
		For the last statement, we see that, if $\mathrm{mult}_{F}\xi_\mathfrak{d}=\prod_{i=1}^r\binom{2n_i}{n_i}$, we can retrace our steps and obtain that $F=x_j^\Sigma$ for some partition $\Sigma$ and some $j$.
	\end{proof}


\begin{thebibliography}{ABCD000}
		
		\bibitem[A58]{A} A. Andreotti, \textit{On a theorem of Torelli}, Amer. J. of Math. 80 (1958) 801--828.
		
		\bibitem[ACGH85]{ACGH} E. Arbarello, M. Cornalba, P. Griffiths and J. Harris, \textit{Geometry of Algebraic Curves}, I, Grundl. der Math. Wiss.,
		267, Springer-Verlag, New York, 1985.
		
		\bibitem[ACSM19]{ACSM} R.~Auffarth, G.~Codogni, R.~Salvati~Manni. \textit{The Gauss map and secants of the Kummer variety.} Bull. London Math. Soc. 51 (2019), 489--500. 
		
		\bibitem[BL04]{BL} Ch.~Birkenhake, H.~Lange. \textit{Complex abelian varieties.}
		Second edition. Grundlehren der Mathematischen Wissenschaften 302. \textit{Springer-Verlag, Berlin}, 2004.
		
		\bibitem[GH11]{GH} P. Griffiths and J. Harris, \textit{Principles of Algebraic Geometry}, Wiley Classics
		Library, Wiley, 2011.
		
		\bibitem[G86]{RG} R. Gunning. \textit{Some identities for abelian integrals}, Amer. J. Math. (1986) 39-74.
		
		\bibitem[K10]{K}I. Krichever. \textit{Characterizing Jacobians via trisecants of the Kummer variety}, Ann. of Math. 172 (2010), no. 1, 485-516
		
		
		\bibitem[T05]{T} E. Tevelev. \textit{Projective Duality and Homogeneous Spaces}, Encyclopaedia of Mathematical Sciences, Springer Verlag, Berlin, 2005.
		
		\bibitem[W84]{W}G. Welters. \textit{A criterion for Jacobi varieties}. Ann. of Math. (2), 120(3):497-504, 1984.
		
	\end{thebibliography}
\end{document}